 \newtheorem{thm}{Theorem}
 \newtheorem{cor}[thm]{Corollary}
 \numberwithin{equation}{section}
\title{On Appell Sets and the Fueter-Sce Mapping}
\author{Norman G{\"u}rlebeck\thanks{Institute of Theoretical Physics, V Hole\v{s}ovick\'ach 2, 180 00 Praha 8 - Hole\v{s}ovice, Czech Republic. {\tt E-mail: norman.guerlebeck@gmail.com}}}
\begin{document}
\maketitle
\date

\begin{abstract}
It is proved, that the recently discussed Appell polynomials in Clifford algebras are the Fueter-Sce extension of the complex monomials $z^k$. Furthermore, it is shown, for which complex functions the Fueter-Sce extension and the extension method using Appell polynomials coincide.   
\end{abstract}
{\bf Keywords}: {Appell sets, Fueter-Sce extension, monogenic polynomials, formal power series}\\[0.2cm]
{\bf AMS-Classification}: 30G35, 33C65

\section{Introduction}

To apply methods of Clifford analysis as advantageously as in complex analysis to partial differential equations the notion 
of $C\ell(n)-$holomorphic functions is indispensable. Therefore, a lot of effort was spent to characterize $C\ell(n)-$holomorphic 
functions and several approaches were developed. But unlike the complex case they do not coincide in general. Among others 
these approaches are a generalized Riemann approach using functions in the kernel of a generalized Cauchy-Riemann system 
\cite{GHS} and a generalized Weierstrass approach using convergent power series expansions \cite{Malonek_1990}. The equivalence of 
these two approaches was shown in \cite{Malonek_1990}. A characterization of $C\ell(n)-$holomorphy as hypercomplex differentiability is also equivalent to the two above given approaches, see \cite{Guerlebeck_Malonek_1999,MitelmanShapiro_1995,Sudbery_1979}. Another method to construct $C\ell(n)-$holomorphic functions in an intrinsic manner, i.e. without using 
functions given already in a lower dimension, uses functional equations \cite{Guerlebeck_2007b}. However, this is only applicable for particular functions. 

Furthermore, it is desirable to extend certain functions given in $\mathbb C$ or $\mathbb R^n$ to $C\ell(n)$-holomorphic functions. These methods are, for instance, the Cauchy-Kowalewskaja extension \cite{BDS} and the Fueter-Sce extension 
\cite{Fueter_1940,Qian_1997,Sce_1957}. In \cite{Sudbery_1979} the extension of real harmonic functions to 
$\mathbb{H}$-holomorphic functions was considered. The construction of conjugate harmonic functions 
was studied in \cite{Brackx_Delanghe_2003}. 

A substantial progress in the extension approach was made in 
\cite{Malonek_Cruz_Falcao_2006}. There, a certain polynomial system, which satisfies a certain differential equation, is used in a formal power series approach. This enables one to construct $C\ell(n)$-holomorphic functions satisfying special differential equations. However, it is not possible to construct all $C\ell(n)$-holomorphic functions with the considered 
polynomial system. Note, that the here given lists of approaches and references do in no way claim to be complete.

In general the intrinsic methods above can also be used as extension methods, if some defining properties are generalized and used to construct a $C\ell(n)$-holomorphic function. However, this will generally only work for certain functions.
Although, starting with the same real analytic function of one real variable or some of their defining properties the mentioned extension and generalization methods will lead in general to different $C\ell(n)-$holomorphic functions. Whereas the extension to complex functions is uniquely determined via the uniqueness theorem of the Taylor expansion of a holomorphic function.

But like shown in \cite{Guerlebeck_2007a} and later used in \cite{Malonek_Falcao_2007} in the special and particular 
important case of the exponential function the Fueter-Sce extension and the power series approach using Appell polynomials 
define the same $C\ell(n)-$holomorphic function. The present work is dedicated to illuminate this connection further and to generalize 
the result obtained for the exponential function. In order to do so the equality of the discussed Appell monomials and the Fueter-Sce extension of the complex polynomials is 
shown for a suitable choice of constants and all functions satisfying the equality between the Fueter-Sce extension and the power series approach using Appell polynomials are determined.

\section{Preliminaries}

Let the vector space $\mathbb R^n$ with an orthonormal basis $\mathbf e_1,\ldots,\mathbf e_n$ be endowed with the product
\begin{align}
	\mathbf e_i\mathbf e_j+\mathbf e_j\mathbf e_i=-2\delta_{ij}.
\end{align}
This generates the Clifford algebra $C\ell(n)$ in which the vector space $\mathbb R^{n+1}$ is embedded identifying $\vec x=(x_0,\ldots,x_n)$ with the paravector $x=x_0+x_1\mathbf e_1+\ldots+ x_n\mathbf e_n=x_0+\mathbf x$.

The power series approach in \cite{Malonek_Cruz_Falcao_2006} is using homogeneous, $C\ell(n)-$holomorphic polynomials $P_n^k$ of the degree $k$ which are introduced as a generalization of the complex monomials $z^k$
\begin{align}
	P^n_k(x)=\sum\limits_{s=0}^k\,^nT^k_s x^{k-s}\overline{x}^{s},\quad P_n^k(1)=1,
\end{align}
where $n$ denotes the dimension of the considered vector space.
The idea of generating holomorphic functions used in \cite{Malonek_Cruz_Falcao_2006} is now, like in the Weierstrass approach in complex analysis, a series expansion with respect to these polynomials. Since only analytic functions defined on the real axis or their complex holomorphic extensions are to generalize here, we restrict ourselves to formal series expansions with real coefficients. The \emph{Appell extension} of a function $f$ with real Taylor coefficients $a_k$ is then given by
\begin{align}\label{Appell_sequence_extension}
	\eta_n[f](x)=\sum\limits_{k=0}^{\infty}a_kP_k^n(x)
\end{align}
in a neighborhood of the origin $x=0$. We remark, that the generalization of complex holomorphic functions with complex coefficients in their series expansion is not anymore possible in a natural and straightforward way.

Let $f:\{z|z\in\mathbb C\wedge\rm{Im}\,z\geq0\}\to\mathbb C$ be a holomorphic function of the form $f(z)=u(w,y)+{\mathrm i} v(w,y)$, where $\mathrm i$ is the imaginary unit and $u,~v:\mathbb R^2\to\mathbb R$. Then the Fueter-Sce transformed function is given by (e.g. \cite{Sproessig_1999})
\begin{align}\label{Fueter_Sce_mapping_0}
	\tau_n[f](x)=\alpha_n[f]\Delta^{\frac{n-1}{2}}\left(u(x_0,|\mathbf x|)+{\boldsymbol \omega}({\mathbf x})v(x_0,|{\mathbf x}|)\right).
\end{align}
The unit vector $\boldsymbol \omega(\mathbf x)$ is defined through $\frac{\mathbf x}{|\mathbf x|}$. The constant $\alpha_n[f]$ is arbitrary and often used to preserve some properties of the function $f$, like a normalization at a certain point. For even dimensions $n$ the operator $\Delta^{\frac{n-1}{2}}$ has to be understood as a Fourier multiplier operator induced by the symbol $(2\pi\mathrm i|\zeta|)^{n-1}$, see \cite{Qian_1997}. In this case the definition \eqref{Fueter_Sce_mapping_0} is not pointwise and hence not considered here. For an odd dimension $n$ \eqref{Fueter_Sce_mapping_0} simplifies to (see \cite{Sproessig_1999})

\begin{align}\label{Fueter_Sce_mapping}
	\tau_n[f](x)=\alpha_n[f] \left(\left(\frac 1 {|{\mathbf x}|} \partial_{|{\mathbf x}|}\right)^{\frac{n-1} 2}u(x_0,|{\mathbf x}|)+{\boldsymbol \omega}({\mathbf x})\left( \partial_{|{\mathbf x}|}\frac 1 {|{\mathbf x}|}\right)^{\frac{n-1} 2}v(x_0,|{\mathbf x}|)\right).
\end{align}

\section{The Fueter-Sce mapping of the \texorpdfstring{complex\\ mo\-no\-mi\-als}{}}

Now we study the Fueter-Sce map of the complex monomials. Our particular interest lies in the choice of the normalization constant.
First, it is necessary to split $z^k$ in the real and the imaginary part, where $z=w+{\mathrm i} y$ with $w,~y\in{\mathbb R}$:
\begin{align}
	z^k=\sum\limits_{l=0}^{k}\binom{k}{l}({\mathrm i}y)^lw^{k-l}=u_k(w,y)+{\mathrm i}v_k(w,y).
\end{align}
The functions $u_k,~v_k:{\mathbb R}^2\to{\mathbb R}$ are defined by
\begin{align}\label{definition_u_k_v_k}
	u_k(w,y)&=\sum\limits_{p=0}^{\left\lfloor \frac k 2 \right\rfloor}\binom{k}{2p}(-1)^py^{2p}w^{k-2p}\\
	v_k(w,y)&=\sum\limits_{p=0}^{\left\lfloor \frac {k-1} 2 \right\rfloor}\binom{k}{2p+1}(-1)^py^{2p+1}w^{k-2p-1}.
\end{align}
In these formulas $\lfloor q\rfloor$ stands for the floor function defined by $\max(k\in\mathbb Z\,|\, k\leq q)$.
Applying the operators of equation \eqref{Fueter_Sce_mapping} we get

\begin{align}\label{apply_operators}
\begin{split}
	\left(\frac 1 {|{\mathbf x}|} \partial_{|{\mathbf x}|}\right)^{\frac{n-1} 2}u_k(x_0,|{\mathbf x}|)
	=&\sum\limits_{p=0}^{\left\lfloor \frac k 2 \right\rfloor}\binom{k}{2p}(-1)^px_0^{k-2p}
	\beta_{n,2p}\left(|{\mathbf x}|\right)\\
	\left(\partial_{|{\mathbf x}|}\frac 1 {|{\mathbf x}|}\right)^{\frac{n-1} 2}v_k(x_0,|{\mathbf x}|)
	=&\sum\limits_{p=0}^{\left\lfloor \frac {k-1} 2 \right\rfloor}\binom{k}{2p+1}(-1)^px_0^{k-2p-1}\beta_{n,2p+1}\left(|{\mathbf x}|\right),
\end{split}
\end{align}
where
\begin{align}\label{definition_beta}
	\beta_{n,2p+1}\left(|{\mathbf x}|\right)=|{\mathbf x}|\beta_{n,2p}\left(|{\mathbf x}|\right)=	
	\begin{cases}
		\frac{(2p)!!}{(2p-n+1)!!}|{\mathbf x}|^{2p-n+2}&, \text{if}\quad 2p\geq n-1\\
		0&, \text{if}\quad 2p<n-1.
	\end{cases}
\end{align}
The equations \eqref{apply_operators} and \eqref{definition_beta} imply
\begin{align}\label{first_n_monomials}
	\tau_n\left[z^k\right]=0\quad\forall k<n-1.
\end{align}
Therefore, in the following only values $k\geq n-1$ are considered. The constants $\alpha_n\left[z^k\right]$ are fixed by the demand $\tau_n\left[z^k\right](1)=1$, which clearly generalizes the corresponding property of $z^k$ from the complex case. For this purpose the functions $\beta_{j,n}$ from \eqref{definition_beta}  are evaluated at $\mathbf x=0$:
\begin{align}\label{beta_at_zero}
	\beta_{2p,n}(0)=\delta_{2p,n-1}(n-1)!!,\quad\beta_{2p+1,n}(0)=0.
\end{align}
Here $\delta_{i,j}$ denotes the Kronecker symbol and $0!!=1$ is taken into account. With \eqref{Fueter_Sce_mapping}, \eqref{apply_operators} and \eqref{beta_at_zero} we obtain for the constants
\begin{align}\label{alpha_fixed}
	\alpha_n\left[z^k\right]=(-1)^{\frac{n-1}{2}}\frac{(n-2)!!(k-n+1)!}{k!}\quad\forall k\geq n-1.
\end{align}

\section{Comparison with the Appell sequence \texorpdfstring{$P_k^n$}{}}

Appell sequences are defined as polynomial sequences $(S_k)_{k\in\mathbb{N}}$ satisfying the differential equation 
\begin{align}
	{\frac{\rm d}{{\rm d}x}} S_k(x) = kS_{k-1}(x)
\end{align}
and ${\rm deg}\,S_k=k$. Originally introduced for polynomials of one complex variable \cite{Appell_1880}, this definition is naturally extensible to Clifford valued functions of one paravector. Other definitions using a formal power series representation or functional equations are possible, see \cite{Chihara_1978}. 
The here considered Appell sequence $(P_k^n)_{k\in\mathbb N}$ with
\begin{align}\label{Appell_polynomials}
		P^n_k(x)=\sum\limits_{s=0}^k\,^nT^k_s x^{k-s}\overline{x}^{s},\quad P^n_k(1)=1
\end{align}
was introduced in \cite{Malonek_Cruz_Falcao_2006}. An explicit formula for ${}^nT_s^k$ was given in two dimensions in \cite{Malonek_Cruz_Falcao_2006} and for arbitrary dimensions in \cite{Malonek_Falcao_2007}. We are going to use only the alternating sums of the constants ${}^nT_s^k$ also given in \cite{Malonek_Falcao_2007} by
\begin{align}\label{definition_c_n_k}
	c^k_n=\sum\limits_{s=0}^{k}{}^nT_s^k(-1)^s=
	\displaystyle\begin{cases}
		\displaystyle\frac{(k-1)!!(n-2)!!}{(n+k-2)!!}, & \text{if $k$ is even}\\
		\displaystyle\frac{k!!(n-2)!!}{(n+k-1)!!},& \text{if $k$ is odd}.
	\end{cases}
\end{align}

We will now prove, that this particular Appell sequence can also be obtained using a Fueter-Sce transformation of the complex polynomials $z^k$.
\begin{thm}\label{Fueter_Sce_equals_Appell}
	For all odd dimensions $n>1$ and all paravectors in $C\ell(n)$ holds
	\begin{align}
		\tau_n\left[z^{k+n-1}\right](x)=P_k^n(x),
	\end{align}
	if the constant in the Fueter-Sce transformation is chosen according to \eqref{alpha_fixed}.
\end{thm}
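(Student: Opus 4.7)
The plan is to proceed by induction on $k$, exploiting two facts: an Appell-type recursion $\partial_{x_0}F_k = kF_{k-1}$ satisfied by both sequences, and coincidence of the two sides on the imaginary slice $\{x_0 = 0\}$. The base case $k = 0$ is the identity $\tau_n[z^{n-1}] = 1 = P_0^n$. The equality $P_0^n = 1$ is immediate from the normalization $P_0^n(1) = 1$. For the Fueter-Sce side, only the top-order term $p = (n-1)/2$ of $u_{n-1}$ survives the $(n-1)/2$-fold action of $(1/|\mathbf x|)\partial_{|\mathbf x|}$, while the $v$-part is annihilated entirely because the largest power of $y$ in $v_{n-1}$ is only $y^{n-2}$; together with \eqref{alpha_fixed} and the identity $(n-1)!!(n-2)!! = (n-1)!$, this produces the constant $1$.

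The Appell recursion on the Fueter-Sce side follows by commuting $\partial_{x_0}$ past the radial operators in \eqref{Fueter_Sce_mapping} and using $\partial_w u_m = m u_{m-1}$, $\partial_w v_m = m v_{m-1}$ (which are the real and imaginary parts of $\partial_w z^m = m z^{m-1}$). This yields $\partial_{x_0}\tau_n[z^m] = m(\alpha_n[z^m]/\alpha_n[z^{m-1}])\tau_n[z^{m-1}]$; substituting $m = k+n-1$ into \eqref{alpha_fixed}, the ratio collapses to $k/(k+n-1)$, so that $\partial_{x_0}\tau_n[z^{k+n-1}] = k\,\tau_n[z^{k+n-2}]$. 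The analogous relation $\partial_{x_0}P_k^n = k P_{k-1}^n$ is the defining Appell property of $(P_k^n)$, with $\partial_{x_0}$ coinciding on monogenic functions with the hypercomplex derivative used in that definition.

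It remains to verify that both sides coincide at $x_0 = 0$. Since $\mathbf x$ commutes with itself, $\mathbf x^{k-s}(-\mathbf x)^s = (-1)^s\mathbf x^k$, and therefore $P_k^n(0, \mathbf x) = c_n^k\mathbf x^k$ directly from \eqref{definition_c_n_k}. On the Fueter-Sce side only the unique term with a factor $x_0^0$ in \eqref{apply_operators} contributes; computing $\beta_{n,k+n-1}(|\mathbf x|)$ from \eqref{definition_beta} in each parity of $k$, substituting \eqref{alpha_fixed}, and converting $|\mathbf x|^k$ back to $\mathbf x^k$ (via $\boldsymbol\omega^k$ when $k$ is odd) recovers the two branches of \eqref{definition_c_n_k} exactly. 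This careful bookkeeping with signs and double factorials is the main technical obstacle; once in hand, the induction closes immediately, because assuming $\tau_n[z^{(k-1)+n-1}] = P_{k-1}^n$ forces $\partial_{x_0}(\tau_n[z^{k+n-1}] - P_k^n) = 0$, so the difference depends only on $\mathbf x$ and is forced to vanish by its value at $x_0 = 0$.
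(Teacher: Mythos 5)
Your proposal is correct, but it propagates the identity off the hyperplane $x_0=0$ by a genuinely different mechanism than the paper. The shared core is the slice computation: your evaluation of $\tau_n[z^{k+n-1}]$ at $x_0=0$ (only the top term with $x_0^0$ survives, then $\beta_{n,k+n-1}$, \eqref{alpha_fixed} and the conversion $|\mathbf x|\mapsto\boldsymbol\omega\,|\mathbf x|=\mathbf x$ give $c_n^k\mathbf x^k$) is exactly the paper's displayed calculation \eqref{tau_n_k_calculated}, so the bookkeeping you defer is routine and does close as you claim; likewise $P_k^n(\mathbf x)=c_n^k\mathbf x^k$ is the paper's first step. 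Where you diverge is afterwards: the paper invokes monogenicity of both sides and the local uniqueness of the Cauchy--Kowalewskaja extension to pass from $\mathbb R^n$ to $\mathbb R^{n+1}$, whereas you run an induction on $k$ using the Appell-type relation $\partial_{x_0}\tau_n[z^{k+n-1}]=k\,\tau_n[z^{k+n-2}]$ (obtained by commuting $\partial_{x_0}$ past the radial operators, with the ratio of the constants \eqref{alpha_fixed} collapsing correctly to $k/(k+n-1)$, valid in the needed range $k\geq1$) together with $\partial_{x_0}P_k^n=kP_{k-1}^n$, concluding that the polynomial difference is $x_0$-independent and vanishes on the slice. Your route is more elementary and self-contained — it avoids CK uniqueness and the paper's slightly loose step extending equality from a neighborhood of $\mathbb R^n$ to all of $\mathbb R^{n+1}$ — and it makes transparent that the choice \eqref{alpha_fixed} is precisely what turns the Fueter--Sce images into an Appell system; the price is that you must import from the cited literature the Appell property of $(P_k^n)$ with respect to the hypercomplex derivative and its identification with $\partial_{x_0}$ on monogenic functions, where the paper only needs monogenicity of both families. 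Your base case $\tau_n[z^{n-1}]=1$, via $(n-1)!!\,(n-2)!!=(n-1)!$ and the vanishing of the $v$-part, is also correct.
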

\begin{proof}
To prove this theorem it is sufficient to restrict the arguments $x$ to vectors $\mathbf x$. If we have shown the equality in this case, arbitrary arguments are considered by using the uniqueness of the Cauchy-Kowalewskaja extension. For vectors $\mathbf x$ the polynomials of the Appell sequence \eqref{Appell_polynomials} are of the form
\begin{align}
	P^n_k(\mathbf x)=\sum\limits_{s=0}^k{}^nT_s^k(-1)^s\mathbf x^k=c_n^k\mathbf x^k.
\end{align}

The polynomials $\tau_n\left[z^l\right]$ for vectors $\mathbf x$ are obtained from \eqref{Fueter_Sce_mapping}, \eqref{apply_operators}, \eqref{definition_beta} and \eqref{alpha_fixed} for $x_0=0$ and $l\geq n-1$:
\begin{align}\label{tau_n_k_calculated}
\begin{split}
	\tau_n\left[z^l\right](\mathbf x)=&\alpha_n\left[z^l\right]
	\begin{cases}
		\displaystyle(-1)^{\frac l 2}\beta_{n,l}(|\mathbf x|), & \text{if $l$ is even}\\
		\displaystyle(-1)^{\frac {l-1} 2}\boldsymbol{\omega}(\mathbf x)\beta_{n,l}(|\mathbf x|), & \text{if $l$ is odd}\\
	\end{cases}\\
	=&
	\begin{cases}
		\displaystyle\frac{(l-n)!!(n-2)!!}{(l-1)!!}\mathbf x^{l-n+1}, & \text{if $l$ is even}\\
		\displaystyle\frac{(l-n+1)!!(n-2)!!}{l!!}\mathbf x^{l-n+1}, & \text{if $l$ is even}.
	\end{cases}
\end{split}
\end{align}
Writing $k+n-1$ for $l$ with $k\geq 0$
\begin{align}
\tau_n\left[z^{k+n-1}\right](\mathbf x)=P_k^n(\mathbf x)
\end{align}
can be concluded. Therefore, both functions coincide in the hyperplane $\mathbb R^n$. Furthermore, both functions are $C\ell(n)-$holomorphic due to the constructions. The problem of expanding a sufficiently smooth function defined in $\mathbb R^n$ to a $C\ell(n)$-holomorphic function in $\mathbb R^{n+1}$ has at least locally a unique solution - the Cauchy-Kowalewskaja extension. Thus, the functions $P^n_k$ and $\tau_n\left[z^{k+n-1}\right]$ are equal in a neighborhood of $\mathbb R^{n}$ in $\mathbb R^{n+1}$. However, the domains of both functions coincide. Hence, they are equal in $\mathbb R^{n+1}$.
\end{proof}

At a first glance one might think, that the result of the theorem \ref{Fueter_Sce_equals_Appell}, the equality of the Fueter-Sce extension of the monomials $z^k$ and the Appell polynomials, is extensible to all analytic functions of a real variable. But this is not true, because different degrees $k$ require different choices of the constants $\alpha_n[z^k]$ in order to preserve the normalization. This problem is associated with the property of the Fueter-Sce extension to be only linear in two functions $f,~g$ with a non-vanishing Fueter-Sce extension, if and only if $\alpha_n[f+g]=\alpha_n[f]=\alpha_n[g]$ holds. Nevertheless, theorem \ref{Fueter_Sce_equals_Appell} can be generalized to a special class of analytic functions defined on the real line. This is done in the next two sections.

\section{Comparison of both extension methods}

The theorem \ref{Fueter_Sce_equals_Appell} implies the following:

\begin{thm}\label{theorem_recursion_relation}
Let $f$ be a holomorphic function in $\mathbb C$, which has real Taylor coefficients $a_k$ in the neighborhood of $z=0$. Then for any odd dimension $n>1$ the equality
\begin{align}
	\tau_n[f](x)=\eta_n[f](x)
\end{align}
holds for all paravectors $x\in Cl(n)$, if and only if the recurrence formula
	\begin{align}\label{recursion_relation}
		a_{k+n-1}=\frac{\gamma_n[f]k!}{(k+n-1)!}a_{k}
	\end{align}
	is satisfied. The constant $\gamma_n[f]$ depends on the dimension $n$ and the function $f$, but not on $k$.
\end{thm}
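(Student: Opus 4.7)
The plan is to reduce the theorem to Theorem~\ref{Fueter_Sce_equals_Appell} by expanding $f$ in its Taylor series and exploiting the linearity of the differential operator appearing in \eqref{Fueter_Sce_mapping}. Writing $f(z)=\sum_{k\geq 0}a_k z^k$, the real part $u$ and imaginary part $v$ of $f$ are the sums of the $a_k u_k$ and $a_k v_k$ defined in \eqref{definition_u_k_v_k}. Because the radial operators $\bigl(\tfrac{1}{|\mathbf{x}|}\partial_{|\mathbf{x}|}\bigr)^{(n-1)/2}$ and $\bigl(\partial_{|\mathbf{x}|}\tfrac{1}{|\mathbf{x}|}\bigr)^{(n-1)/2}$ are linear and term-by-term differentiation of a power series is permitted inside its disk of convergence, one obtains, using \eqref{first_n_monomials},
\begin{align*}
\tau_n[f](x)=\alpha_n[f]\sum_{k\geq n-1}\frac{a_k}{\alpha_n[z^k]}\,\tau_n[z^k](x),
\end{align*}
where each $\tau_n[z^k]$ on the right is built with the normalization \eqref{alpha_fixed}. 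Applying Theorem~\ref{Fueter_Sce_equals_Appell} to each summand and reindexing with $k=j+n-1$ yields
\begin{align*}
\tau_n[f](x)=\alpha_n[f]\sum_{j\geq 0}\frac{a_{j+n-1}}{\alpha_n[z^{j+n-1}]}\,P_j^n(x).
\end{align*}

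Next, I would compare this with $\eta_n[f](x)=\sum_{j\geq 0}a_j P_j^n(x)$. Because the polynomials $P_j^n$ have pairwise distinct degrees they are linearly independent, hence the equality $\tau_n[f]=\eta_n[f]$ on a neighborhood of $0$ is equivalent to matching the coefficients of each $P_j^n$,
\begin{align*}
a_j=\alpha_n[f]\,\frac{a_{j+n-1}}{\alpha_n[z^{j+n-1}]}\quad\text{for all } j\geq 0.
\end{align*}
Substituting the explicit value \eqref{alpha_fixed} of $\alpha_n[z^{j+n-1}]$ and setting $\gamma_n[f]:=(-1)^{(n-1)/2}(n-2)!!/\alpha_n[f]$, which depends on $f$ and $n$ but not on $j$, produces precisely the recurrence \eqref{recursion_relation}. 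This argument yields both implications at once: necessity by reading off the coefficients, and sufficiency by reassembling the series from them.

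The main obstacle I expect is the rigorous justification of interchanging the Fueter-Sce operator with the infinite Taylor sum. This is routine on the common domain where $\eta_n[f]$ and the term-wise computation of $\tau_n[f]$ both converge, but it requires verifying that the partial sums of $f$, together with their images under the radial operators, converge uniformly on compacta in a neighborhood of $0$; this follows from the holomorphy of $f$ and the fact that the radial operators amount to iterated differentiation in $|\mathbf{x}|$. A secondary, purely bookkeeping point is keeping track of how the single global constant $\alpha_n[f]$ interacts with the per-monomial constants $\alpha_n[z^k]$. These cancel once Theorem~\ref{Fueter_Sce_equals_Appell} is inserted, which is precisely why the recurrence \eqref{recursion_relation} depends only on the single ratio $\gamma_n[f]$ rather than on the whole sequence of normalization constants.
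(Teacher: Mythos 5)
Your proposal is correct and follows essentially the same route as the paper: expand $f$ in its Taylor series, apply the Fueter--Sce operator termwise, invoke Theorem \ref{Fueter_Sce_equals_Appell} together with \eqref{first_n_monomials} to rewrite the result as $\alpha_n[f]\sum_{k\geq0}\frac{a_{k+n-1}}{\alpha_n[z^{k+n-1}]}P_k^n(x)$, and compare coefficients with $\eta_n[f]$ via linear independence of the $P_k^n$, arriving at the same constant $\gamma_n[f]=(-1)^{(n-1)/2}(n-2)!!/\alpha_n[f]$.
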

\begin{proof}
	To prove this result the Fueter-Sce mapping \eqref{Fueter_Sce_mapping} is applied to
	\begin{align}
		f(z)=\sum\limits_{k=0}^{\infty}a_kz^k=\sum\limits_{k=0}^{\infty}a_ku_k(w,y)+
		\mathrm i\sum\limits_{k=0}^{\infty}a_kv_k(w,y),
	\end{align}
	where $z=w+\mathrm iy$ with $w,~y\in\mathbb R$, $a_k\in\mathbb R$ and $u_k,~v_k$ defined like in
	\eqref{definition_u_k_v_k}. Because $f$ is holomorphic the Fueter-Sce extension gives
	\begin{align}
		\tau_n[f](x)\hspace{-0.05cm}=\hspace{-0.05cm}\alpha_n[f]\sum\limits_{k=0}^{\infty}\hspace{-0.05cm}a_k\hspace{-0.05cm}\left(\left(\frac 1 {|{\mathbf x}|}
		\partial_{|{\mathbf x}|}\right)^{\frac{n-1} 2}\hspace{-0.3cm}u_k(x_0,|{\mathbf x}|)\hspace{-0.05cm}+\hspace{-0.05cm}{\boldsymbol \omega}({\mathbf x})\hspace{-0.05cm}
		\left(\partial_{|{\mathbf x}|}\frac {1}{|{\mathbf x}|}\right)^{\frac{n-1} 2}\hspace{-0.3cm}v_k(x_0,|{\mathbf x}|)
		\right).
	\end{align}
	With equation \eqref{first_n_monomials} and theorem \ref{Fueter_Sce_equals_Appell} this can be
  rewritten in the form
	\begin{align}\label{arbitrary_fuction_FS}
	\begin{split}
		\tau_n[f](x)&=\alpha_n[f]\sum\limits_{k=n-1}^{\infty}\frac{a_k}{\alpha_n\left[z^k\right]}
		\tau_n\left[z^k\right](x)=\alpha_n[f]\sum\limits_{k=0}^{\infty}\frac{a_{k+n-1}}
		{\alpha_n\left[z^{k+n-1}\right]}P^n_k(x).
	\end{split}
	\end{align}
	The polynomials $P^n_k$ are linearly independent for different degrees $k$. Hence, the series expansion
  in \eqref{arbitrary_fuction_FS} equals the Appell extension of $f$ if and only if the recurrence formula \eqref{recursion_relation} is 
  satisfied due to \eqref{alpha_fixed}, where $\gamma_n[f]$ is given by
  \begin{align}
  	\frac{(-1)^{\frac{n-1}{2}}(n-2)!!}{\alpha_n[f]}.
  \end{align}\qedhere
  \end{proof}
In the following a closed representation formula for all functions satisfying the recurrence formula \eqref{recursion_relation} is obtained. Firstly, the recurrence formula is solved and afterwards all corresponding holomorphic functions are determined.

\begin{cor}
	For any odd dimension $n>1$ the solution of the recurrence formula \eqref{recursion_relation} is given by
	\begin{align}\label{explicit_solution_recursion_relation}
		a_m=\left(\gamma_n[f]\right)^l\frac{r!}{((l+1)(n-1)+r)!}a_r,	
	\end{align}
	such that $m=l(n-1)+r$, $r\in\{0,1,..,n-2\}$ and $l\in\mathbb N$. The real coefficients $a_0,\ldots,a_{n-2}$
  are the initial conditions and can be chosen arbitrarily.
\end{cor}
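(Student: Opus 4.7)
The plan is to exploit the fact that the recurrence \eqref{recursion_relation} only links indices that differ by exactly $n-1$. Consequently, the sequence $(a_m)_{m\in\mathbb N}$ decouples into $n-1$ independent arithmetic-progression subsequences, one for each residue class modulo $n-1$, and the terms $a_0,\ldots,a_{n-2}$ play the role of the $n-1$ free initial data (nothing in \eqref{recursion_relation} constrains them, since the recurrence only produces indices $\geq n-1$). So the closed form should be obtained by telescoping \eqref{recursion_relation} within a single residue class, and the proof is essentially an induction on the number of iterations.

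Concretely, I would first write every $m\in\mathbb N$ via division with remainder as $m=l(n-1)+r$ with $r\in\{0,1,\ldots,n-2\}$ and $l\in\mathbb N$; the representation is unique, and $l$ records how many times the recurrence must be applied to reach $a_m$ starting from $a_r$. Then, fixing $r$, I would induct on $l$: the base case $l=0$ is the trivial identity $a_r=a_r$, while for the inductive step one applies \eqref{recursion_relation} with $k=l(n-1)+r$ to obtain
\begin{align*}
a_{(l+1)(n-1)+r}=\frac{\gamma_n[f]\,(l(n-1)+r)!}{((l+1)(n-1)+r)!}\,a_{l(n-1)+r},
\end{align*}
substitutes the induction hypothesis for $a_{l(n-1)+r}$, and cancels the factorial $(l(n-1)+r)!$. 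This produces exactly the stated formula with $l$ replaced by $l+1$, and in particular the exponent of $\gamma_n[f]$ advances by one, matching the number of iterations performed.

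There is no genuine obstacle: the recurrence is linear and first-order within each residue class, so the solution is completely determined by telescoping. The only points requiring care are bookkeeping—making sure the exponent of $\gamma_n[f]$ coincides with $l$ rather than some off-by-one variant, and confirming that the factorial denominators combine correctly after cancellation—and the remark that the $n-1$ initial values $a_0,\ldots,a_{n-2}$ are truly unconstrained, which follows because \eqref{recursion_relation} never has one of them on its left-hand side.
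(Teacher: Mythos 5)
Your overall strategy --- splitting the indices into residue classes modulo $n-1$ and telescoping the first-order recurrence within each class, with $a_0,\ldots,a_{n-2}$ as free initial data --- is exactly the paper's telescoping-product argument. But the bookkeeping you explicitly postponed is precisely where the proof fails as a proof of the statement as printed. For the formula \eqref{explicit_solution_recursion_relation} the base case $l=0$ is \emph{not} the trivial identity: it reads $a_r=\frac{r!}{(n-1+r)!}\,a_r$, which is false for $a_r\neq 0$ and $n>1$. Your inductive step does not close either: substituting \eqref{explicit_solution_recursion_relation} at level $l$ into
\begin{align*}
a_{(l+1)(n-1)+r}=\frac{\gamma_n[f]\,(l(n-1)+r)!}{((l+1)(n-1)+r)!}\,a_{l(n-1)+r}
\end{align*}
yields
\begin{align*}
a_{(l+1)(n-1)+r}=\left(\gamma_n[f]\right)^{l+1}\frac{r!\,(l(n-1)+r)!}{\left(((l+1)(n-1)+r)!\right)^{2}}\,a_r,
\end{align*}
and there is no factor $(l(n-1)+r)!$ in the denominator of the hypothesis for the claimed cancellation; the result is not the stated formula with $l$ replaced by $l+1$.

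What your telescoping actually proves, if carried out carefully, is
\begin{align*}
a_{l(n-1)+r}=\left(\gamma_n[f]\right)^{l}\frac{r!}{(l(n-1)+r)!}\,a_r,
\end{align*}
that is, $a_m=\gamma_n[f]^{\,l}\,r!\,a_r/m!$. This, and not \eqref{explicit_solution_recursion_relation}, is the solution of \eqref{recursion_relation}: test it on the paper's own example $f(z)=\mathrm e^z$, where $a_m=1/m!$ and $\gamma_n[f]=1$ satisfy \eqref{recursion_relation} and the corrected formula, but not \eqref{explicit_solution_recursion_relation}. The printed corollary carries an index shift (its proof applies the recurrence as $a_{l(n-1)+r}=\gamma_n[f]\,\xi_l\,\xi_{l+1}^{-1}a_{(l-1)(n-1)+r}$, whereas \eqref{recursion_relation} with $k=(l-1)(n-1)+r$ gives the factor $\xi_{l-1}\,\xi_l^{-1}$), and your write-up inherits the discrepancy by asserting, without checking, that the base case is trivial and that the step ``produces exactly the stated formula.'' So either the target formula must be corrected to the one above, or your argument cannot establish it, since the stated expression does not solve \eqref{recursion_relation}.
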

\begin{proof}
	The proof is explicit by using telescoping products. If $l>1$ and the abbreviation $\xi_l=(l(n-1)+r)!$ is used, then the equations
	\begin{align}
	\begin{split}
		a_{l(n-1)+r}&=\gamma_n[f]\frac{\xi_l}{\xi_{l+1}}a_{(l-1)(n-1)+r}=\ldots=\left(\gamma_n[f]\right)^l \frac{\xi_l}{\xi_{l+1}}\frac{\xi_{l-1}}{\xi_{l}}\ldots\frac{\xi_1}{\xi_2}\frac{\xi_0}{\xi_1}a_r
	\end{split}
	\end{align}
	are obtained which leads directly to \eqref{explicit_solution_recursion_relation}.
\end{proof}
Endowed with the explicit solution of the recurrence formula \eqref{explicit_solution_recursion_relation}
we can state theorem \ref{theorem_recursion_relation} more precisely.
\begin{thm}\label{fct_class}
Let $f$ be a holomorphic function in $\mathbb C$, which has real Taylor coefficients $a_k$ in the neighborhood of $z=0$. Then in any odd dimension $n>1$ the equality
\begin{align}
	\tau_n[f](x)=\eta_n[f](x)
\end{align}
holds, if and only if $f(z)$ is of the form
\begin{align}\label{closed_representation}
  \sum\limits_{r=0}^{n-2}\frac{r!a_rz^r}{(n-1+r)!}\,_1F_{n-1}\left(1;\frac{n+r}{n-1},
	\frac{n+r+1}{n-1},\ldots,\frac{2n+r-2}{n-1};
	\frac{\gamma_n[f]z^{n-1}}{(n-1)^{n-1}}\right),
\end{align}
where ${}_1F_{n-1}$ is a generalized hypergeometric function and $\gamma_n[f]$ a arbitrary real constants. 
\end{thm}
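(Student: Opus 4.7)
The plan is to combine Theorem \ref{theorem_recursion_relation} with the explicit solution of the recurrence given in the preceding corollary, and then resum the resulting Taylor series so that the hypergeometric structure becomes transparent. Concretely, by Theorem \ref{theorem_recursion_relation} the identity $\tau_n[f]=\eta_n[f]$ is equivalent to the real Taylor coefficients $a_k$ of $f$ satisfying the recurrence \eqref{recursion_relation}, and by the preceding corollary every such coefficient is expressed in terms of one of the $n-1$ free initial values $a_0,\ldots,a_{n-2}$ via \eqref{explicit_solution_recursion_relation}, indexed by the residue $r\in\{0,\ldots,n-2\}$ of the summation index modulo $n-1$ and the corresponding quotient $l\in\mathbb N$.

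Next I would substitute this closed form into $f(z)=\sum_m a_m z^m$ and split the sum according to the residue class, writing $m=l(n-1)+r$. Factoring out the $l$-independent prefactor $r!\,a_r z^r$ reduces the task, for each fixed $r$, to identifying the inner series
\begin{align*}
S_r(z)=\sum_{l=0}^{\infty}\frac{(\gamma_n[f])^l z^{l(n-1)}}{((l+1)(n-1)+r)!}
\end{align*}
as a (multiple of a) generalized hypergeometric function in the variable $\gamma_n[f]z^{n-1}/(n-1)^{n-1}$.

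The main technical step, and the place where I expect the bookkeeping to be delicate, is to rewrite the factorial $((l+1)(n-1)+r)!$ as a product of $n-1$ Pochhammer symbols in the summation index $l$. This is precisely Gauss' multiplication identity $(a)_{(n-1)l}=(n-1)^{(n-1)l}\prod_{j=0}^{n-2}\bigl(\frac{a+j}{n-1}\bigr)_l$ applied to the $l(n-1)$ consecutive integers $n+r,n+r+1,\ldots,(l+1)(n-1)+r$, which telescope to $(n-1+r)!\,(n-1)^{l(n-1)}\prod_{j=0}^{n-2}\bigl(\frac{n+r+j}{n-1}\bigr)_l$. Feeding this back into $S_r(z)$ yields $\frac{1}{(n-1+r)!}\,{}_1F_{n-1}\bigl(1;\frac{n+r}{n-1},\ldots,\frac{2n+r-2}{n-1};\frac{\gamma_n[f]z^{n-1}}{(n-1)^{n-1}}\bigr)$, and summing over $r\in\{0,\ldots,n-2\}$ produces exactly the claimed closed form \eqref{closed_representation}. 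The upper parameter $1$ of the hypergeometric function appears because $(1)_l=l!$ cancels the extra $1/l!$ in the standard definition of ${}_1F_{n-1}$, so that the denominators of $S_r$ really are the bare Pochhammer products $\prod_j(b_j)_l$ with no loose factorial left over.

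The converse is then automatic, since every step in the rearrangement above is reversible: any holomorphic $f$ whose Taylor series has the form \eqref{closed_representation} necessarily has coefficients $a_m$ obeying \eqref{explicit_solution_recursion_relation}, hence the recurrence \eqref{recursion_relation}, and Theorem \ref{theorem_recursion_relation} then supplies $\tau_n[f](x)=\eta_n[f](x)$ for all paravectors $x\in C\ell(n)$.
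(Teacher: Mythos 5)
Your proposal is correct and follows essentially the same route as the paper: combining Theorem \ref{theorem_recursion_relation} with the explicit solution \eqref{explicit_solution_recursion_relation}, splitting the Taylor series into residue classes $m=l(n-1)+r$, and identifying each inner series with $\frac{1}{(n-1+r)!}\,{}_1F_{n-1}$. Your use of Gauss' multiplication identity for $(n+r)_{l(n-1)}$ is just a packaged form of the paper's telescoping computation of $\Phi_l$ in \eqref{phi_calculated}, so the two arguments coincide in substance.
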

\begin{proof}
	With the explicit solution of the recurrence formula we can rewrite the function $f$ and due to the holomorphy rearrange the sum
	\begin{align}\label{series_expansion_with_recursion_relation}
	\begin{split}
		f(z)&=\sum\limits_{r=0}^{n-2}\sum\limits_{l=0}^{\infty}a_{l(n-1)+r}z^{l(n-1)+r}\\
		&=\sum\limits_{r=0}^{n-2}	r!a_rz^r\sum\limits_{l=0}^{\infty}
		\frac{\left(\gamma_n[f]\right)^l}{((l+1)(n-1)+r)!}z^{l(n-1)}.
	\end{split}
	\end{align}
	Using the definition of the generalized hypergeometric function we obtain
	\begin{align}\label{hypergepmetric_function_definition}
	\begin{split}
		_1&F_{n-1}\left(1;\frac{n+r}{n-1},\frac{n+r+1}{n-1},\ldots,\frac{2n+r-2}{n-1};
		\frac{\gamma_n[f]z^{n-1}}{(n-1)^{n-1}}\right)\\
		=&\sum\limits_{l=0}^{\infty}{\gamma_n[f]^l\left(\frac{z}{n-1}\right)^{l(n-1)}}\Phi_l,
	\end{split}
	\end{align}
	where $\Phi_l$ is defined by
	\begin{align}
	\begin{split}
		\Phi_l=\prod\limits_{s=0}^{n-2}\frac{\Gamma(\frac{n+s+r}{n-1})}{\Gamma(\frac{n+s+r}{n-1}+l)}.
	\end{split}
	\end{align}
	Simple manipulations of this formula lead to
	\begin{align}
		\Phi_l^{-1}=\prod\limits_{s=0}^{n-2}\prod\limits_{j=0}^{l-1}\left(\frac{n+s+r}{n-1}+j\right)
		=\frac{1}{(n-1)^{l(n-1)}}\prod\limits_{j=0}^{l-1}\frac{((j+2)(n-1)+r)!}{((j+1)(n-1)+r)!}.
	\end{align}
	Again a telescoping product simplifies the calculations and we get
	\begin{align}\label{phi_calculated}
		\Phi_l=(n-1)^{l(n-1)}\frac{(n-1+r)!}{((l+1)(n-1)+r)!}. 
	\end{align}
	Inserting \eqref{phi_calculated} in \eqref{hypergepmetric_function_definition} and the result in
	\eqref{closed_representation}	the form \eqref{series_expansion_with_recursion_relation} is obtained,
	 which completes the proof.
\end{proof}
If we ask whether a function $f$ is of the form \eqref{closed_representation} it can be easier to show that the recurrence formula is satisfied. As an example the results concerning a hypercomplex exponential function in \cite{Guerlebeck_2007a,Malonek_Falcao_2007} are rediscovered in a very simple manner. The coefficients of the Taylor series of the real exponential function are given by $a_m=\frac 1 {m!}$. 
Inserting this in the recurrence formula \eqref{recursion_relation} leads to
\begin{align}
	\gamma_n\left[{\mathrm e}^z\right]=1,
\end{align}
which does not depend on $k$. Therefore, the recurrence formula is satisfied and Fueter-Sce extension and the Appell extension of the exponential function coincide. Thus, the result of \cite{Guerlebeck_2007a,Malonek_Falcao_2007} is included as a special case if the results about the Fueter-Sce extension of the exponential function in \cite{Sproessig_1999} are taken into account. 
Furthermore, we obtained decompositions of the complex exponential function $\mathrm e^z$ with respect to certain generalized hypergeometric functions
\begin{align}
	\mathrm e^z\hspace{-0.1cm}=\sum\limits_{r=0}^{n-2}\frac{z^r}{(n-1+r)!}\,_1F_{n-1}\left(1;\frac{n+r}{n-1},
	\frac{n+r+1}{n-1},\ldots,\frac{2n+r-2}{n-1};\frac{z^{n-1}}{(n-1)^{n-1}}\right),\notag
\end{align}
where the parameter $n$ is odd and $n\geq3$.

In the same manner, it can be easily shown, that the hyperbolic sine and hyperbolic cosine are in the class of functions described in theorem \ref{fct_class}.

\section*{Acknowledgment}

The first steps toward these results were done during a SOCRATES financed stay at the University Aveiro. The author wants to thank H. Malonek for a lot of discussions and his great interest in this work. Many useful remarks of \mbox{K. G\"urlebeck} during the preparation of the manuscript are gratefully acknowledged.
\end{document}